\documentclass[12pt,letterpaper]{article}
\usepackage{setspace}
\usepackage[margin=1.2in]{geometry}

\usepackage{epsfig, amsthm, amsmath, amssymb, latexsym, xpatch, xcolor, cancel}
\usepackage[titletoc]{appendix}
\usepackage{float} 
\usepackage[margin=0.5cm, font={small,it}]{caption}
\usepackage{wrapfig}
\usepackage[normalem]{ulem} 

\usepackage[mathscr]{euscript} 

\usepackage{graphicx}

\theoremstyle{plain}
\newtheorem{theorem}{Theorem}
\newtheorem*{theorem*}{Theorem}
\newtheorem{proposition}{Proposition}
\newtheorem{lemma}{Lemma}
\newtheorem*{lemma*}{Lemma}

\theoremstyle{definition}

\newtheorem*{definition*}{Definition}

\newtheorem*{example*}{Example}
\newtheorem*{remark*}{Remark}

\newcommand{\To}{\Rightarrow}

\newcommand{\op}{\begin{itemize}}
\newcommand{\ed}{\end{itemize}}

\newcommand{\opp}{\begin{quote}}
\newcommand{\edd}{\end{quote}}

\newcommand{\ope}{\begin{enumerate}}
\newcommand{\ede}{\end{enumerate}}

\newcommand{\im}{\item}

\newcommand{\PP}{\mathbb{P}}
\newcommand{\phii}{\varphi}

\title{Meager Success: A Theory of the Unlearnable for Hypothesis Testing}

\author{Hanti Lin \\[0.5em] University of California, Davis \\ika@ucdavis.edu}

\date{}

\begin{document}

\maketitle

\begin{abstract} \noindent When the standard of pointwise consistency for statistical inference---convergence to the truth in every possible state of the world---is provably unachievable, the usual responses are to change the inferential target or to strengthen background assumptions. This paper pursues a third: hold the inference problem fixed and identify the highest standard that remains achievable. I define a hierarchy of standards weaker than pointwise consistency, cast in topological terms, requiring convergence to the truth not everywhere but on a ``large'' set of probability measures. The main result is an impossibility theorem: for finite-precision tests, converging to the truth densely within each hypothesis already forces inconsistency on a comeager---``topologically almost all''---set of measures, whenever the two hypotheses are dense in their union. Distribution-free testing of conditional independence is one such case. Two further theorems characterize, in purely topological terms about the structures of the tested hypotheses, exactly when each of the weaker standards under study is achievable. 
\end{abstract} 


\section{Introduction}

How should we respond when the standard we would like an inference procedure to satisfy turns out to be unachievable? To fix ideas, begin with the standard most often taken for granted: {\em pointwise consistency}, that is, convergence (in probability) to the true learning target in every state of the world compatible with the background assumptions. This might appear to be a minimum qualification on any inference procedure. When this standard is provably too high to be achievable, there are two usual responses. Way 1: change the learning target, such as estimating, not the average treatment effect, but the complier average treatment effect (Angrist, Imbens \& Rubin 1996). Way 2: add background assumptions, such as the smoothness condition about the underlying distribution or the causal faithfulness assumption in causal discovery, i.e., causal graphical structure learning (Spirtes, Glymour \& Scheines 2000). 

There is a third way, although it is less familiar. In causal discovery, Lin and Zhang (2020) propose that we do not have to always pile on assumptions. Instead, without adding background assumptions, we can explore lower standards, asking whether some standard low enough to be achievable is still high enough to justify our inference procedures. In this spirit, Lin and Zhang (2020) explore the standard of convergence to the true learning target in, not all, but {\em almost all}, states of the world compatible with the background assumptions---``almost all'' in a topological sense. Once such a standard is provably achievable, the next step is to ask whether we can raise the bar without sacrificing achievability. The idea, put roughly, is this: {\em explore what standards can be achieved, and strive for the highest achievable one.} This is the third way I have in mind. 

Although less familiar in this form, the underlying idea is in fact {\em old}---in a sense, textbook standard. What Lin and Zhang (2020) do not note, and I hasten to add, is that the spirit of exploring what can be achieved and striving for the highest achievable one has a venerable precedent. Soon after Neyman and Pearson's (1933) work on the standard of {\em uniformly most powerful testing}, they recognized that this standard is too high to be achievable in two-sided problems. What they did was not to change the learning target or to pile on assumptions. Indeed, in Neyman and Pearson's (1936) follow-up, they explored a lower standard: uniformly most powerful {\em unbiased} testing. 

To be sure, the principle of achieving the highest achievable needs a qualification: when the highest achievable is shown to be too low a standard, it is time to get back to Way 1 or Way 2. That is, there should be a {\em minimum qualification} for justified inference procedures. And the principle should be: achieving the highest achievable, unless that is too low to be worth striving for---lower than the minimum qualification. 

This raises a philosophical question: what should the minimum qualification be? Lin and Zhang (2020) can be understood to propose that the minimum qualification should be lower than pointwise consistency. And I can imagine that some would object, defending the view that the minimum qualification is pointwise consistency. Perhaps some would even argue that the minimum qualification is higher---being, say, uniform consistency.

This is not the place to engage with this philosophical debate. But some mathematical work can be done to facilitate the debate. Evaluative standards lower than pointwise consistency can be explored and rigorously defined, and the question of when they are achievable can be mathematically studied, hopefully with some theorems. Examples of inference problems in which those standards are achievable or unachievable can also be studied mathematically. It is only once we have such theorems and examples in hand that we can do the philosophy in needed---the philosophy of statistics that the debate calls for: if we set the minimum qualification here or there, what must be sacrificed---which targets changed, or which assumptions added, and in which inference problems? 

Here is an example in point: the problem of testing conditional independence given a real-valued variable in a fully distribution-free setting is very hard, so much so that, in a sense, any reasonable test (or sequence of tests) is doomed to be inconsistent relative to ``almost all'' probability measures---in the sense of ``almost all'' used in Banach's (1931) celebrated theorem that ``almost all'' continuous functions are nowhere differentiable, that is, ``almost all'' in the sense of being comeager in topology. This is one application of a new impossibility result (Theorem 1, in Section 3). 

I also define various standards---lower than pointwise consistency---for assessing tests, and prove necessary and sufficient conditions for when each is achievable (Theorems 2 and 3, in Sections 4 and 6; Section 5 records a structural characterization of the key condition). Those conditions are topological, regarding the topological structures of the tested hypotheses (under the weak topology). This can be understood as a companion to the work of Genin \& Kelly (2017) and Boeken, Skapinakis, Genin \& Mooij (2026) on a topological characterization of when pointwise consistency is achievable, and the work of Kelly (1996) on a topological characterization for the deterministic counterpart of pointwise consistency (in formal learning theory).

\section{Definitions}

Let ${\cal X}$ be a space of possible data points, assumed to be Polish (i.e., separable and completely metrizable). {\em Hypotheses} are identified with sets of probability measures on $\mathscr{B}({\cal X})$, the Borel $\sigma$-algebra over ${\cal X}$. A {\em hypothesis testing problem} consists of two disjoint hypotheses $H_0$ and $H_1$.  

Given a hypothesis testing problem $H_0$ versus $H_1$, a {\em test} $\phii$ is a sequence of measurable functions $\phii_1, \phii_2, \ldots$ such that $\phii_n: {\cal X}^n \to \{0, 1, \texttt{?}\}$ sends each data sequence of length $n$ to one of the two competing hypotheses (as its inferential output). Let $\{\phii_n = i\}$ denote the set $\{x \in {\cal X}^n: \phii_n(x) = i\}$. A test $\phii$ is said to be {\em consistent} at $\PP  \in H_i$ iff $\PP^{\;\!n}\{\phii_n = i\} \to 1$ as $n$ increases indefinitely---that is, the probability of identifying the true hypothesis converges to 1. Note that $\PP^{\;\!n}$ denotes the $n$-fold product measure of $\PP$; so, IID data are assumed.

Let the space of all probability measures on $\mathscr{B}({\cal X})$ be equipped with the weak topology, and let each subset of this space be equipped with the subset topology. In a topological space, define topological sizes as follows
	\op
	\im A set $S$  is called {\em nowhere dense} (or {\em very meager}) iff every open set that overlaps $S$ contains a nonempty open subset disjoint from $S$---that is, $S$ is ``full of holes''.
	\im A set is called {\em meager} iff it is a countable union of nowhere dense sets.
	\im A set is called {\em nonmeager} iff it is not meager.
	\im A set is called {\em comeager} iff its complement (in the topological space in question) is meager.
	\ed 

Let $\phii$ be a test for a hypothesis testing problem $H_0$ versus $H_1$. When $\phii$ has a certain property at each probability measure in set $S$ (as a subset of a topological space), say that $\phii$ has that property {\em on} $S$. Define standards for assessing $\phii$ as follows, from high to low:
	\op
	\im $\phii$ is called {\em everywhere consistent}---but more commonly called {\em pointwise consistent}---iff $\phii$ is consistent on $H_0 \cup H_1$.
		
	\im $\phii$ is called {\em comeagerly consistent} iff $\phii$ is consistent on some comeager subset of the union $H_0 \cup H_1$ (as a topological space).
	
	\im $\phii$ is called {\em nonmeagerly consistent} iff $\phii$ is consistent on some nonmeager subset of the union $H_0 \cup H_1$.
	\ed 
Violation of the lowest of the above standards, nonmeager consistency, is disastrous: it means being inconsistent on a comeager subset of the union $H_0 \cup H_1$---it is {\em comeager inconsistency}. Last, let me introduce what should go into any reasonable minimum qualification:
	\op 
	\im $\phii$ is called {\em densely consistent in each hypothesis} iff, for each $i \in \{0, 1\}$, $\phii$ is consistent on some dense subset of $H_i$.
	\ed  
Following Boeken, Skapinakis, Genin \& Mooij (2026), say that a test $\phii$ for a problem $H_0$ versus $H_1$ has the {\em finite-precision} property iff, for any sample size $n$ and any hypothesis $H_i$ with $i \in \{0, 1\}$, its acceptance region $\{\phii_n = i\}$ is open.

\section{The Main Impossibility Result}

\begin{theorem} In any hypothesis testing problem $H_0$ versus $H_1$ such that each hypothesis $H_i$ is dense in the union $H_0 \cup H_1$ $($with respect to the weak topology$)$, no finite-precision test achieves the following two standards simultaneously: 
	\op 
	\im[1.] dense consistency in each hypothesis,
	\im[2.] nonmeager consistency.
	\ed 
\end{theorem}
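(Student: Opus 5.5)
Let $U = H_0 \cup H_1$ with the subset topology, and let $\phii$ be a finite-precision test that is densely consistent in each hypothesis. The plan is to show that the set $C$ of measures in $U$ at which $\phii$ is consistent is meager in $U$; this contradicts nonmeager consistency. I will prove the stronger claim that the set
\[
G=\{\PP\in U:\ \limsup_n \PP^{\;\!n}\{\phii_n=0\} \ge 1/2 \text{ and } \limsup_n \PP^{\;\!n}\{\phii_n=1\}\ge 1/2\}
\]
is comeager in $U$. At every point of $G$ the test is inconsistent, since consistency at $\PP\in H_i$ forces $\PP^{\;\!n}\{\phii_n=1-i\}\to 0$.

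The first ingredient is lower semicontinuity. Since $\{\phii_n=i\}$ is open in ${\cal X}^n$, and $\PP\mapsto \PP^{\;\!n}$ is weakly continuous from measures on ${\cal X}$ to measures on ${\cal X}^n$ (products of weakly convergent measures converge weakly on a separable space), the portmanteau theorem shows that $\PP\mapsto \PP^{\;\!n}\{\phii_n=i\}$ is lower semicontinuous. Therefore, for each $i$, $m$ and $k$, the set
\[
O_{i,m,k}=\bigcup_{n\ge m}\{\PP\in U:\ \PP^{\;\!n}\{\phii_n=i\}>1/2-1/k\}
\]
is open in $U$. Moreover $G_i:=\bigcap_{m,k}O_{i,m,k}$ is exactly the set of $\PP\in U$ with $\limsup_n \PP^{\;\!n}\{\phii_n=i\}\ge 1/2$, so $G=G_0\cap G_1$ is a $G_\delta$ set.

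The second ingredient is density. Let $D_i\subseteq H_i$ be dense in $H_i$ with $\phii$ consistent on $D_i$. Because $H_i$ is dense in $U$ by hypothesis, $D_i$ is dense in $U$. Each $\PP\in D_i$ satisfies $\PP^{\;\!n}\{\phii_n=i\}\to 1$, so $D_i\subseteq O_{i,m,k}$ for every $m$ and $k$. Hence each $O_{i,m,k}$ is open and dense in $U$. Its complement is then closed with empty interior, i.e.\ nowhere dense.

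It follows that $U\setminus G=\bigcup_{i,m,k}(U\setminus O_{i,m,k})$ is a countable union of nowhere dense sets, so $G$ is comeager in $U$. Since $C\subseteq U\setminus G$, the consistency set $C$ is meager, and every subset of $C$ is meager as well. Thus $\phii$ is not nonmeagerly consistent. Notice that no Baire-space property of $U$ is needed, because the argument only asks for meagerness of $C$.

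The step I expect to need the most care is the semicontinuity claim. It requires weak continuity of $\PP\mapsto\PP^{\;\!n}$, a standard fact for separable metric spaces (Billingsley, Theorem 2.8). It also requires the portmanteau inequality $\liminf_j \PP_j^{\;\!n}(O)\ge \PP^{\;\!n}(O)$ for open $O$, applied on the subspace $U$ with sequences; this is legitimate because the weak topology on probability measures on a Polish space is metrizable.
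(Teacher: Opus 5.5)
Your proof is correct, but it takes a more direct route than the paper.

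The paper proves a Vacillation Lemma. By a $(k+1)$-step recursion it builds, inside any given nonempty open set, nested nonempty open sets $O_0\supseteq\cdots\supseteq O_k$ together with common sample sizes $n_0<\cdots<n_k$. At these sample sizes every measure in $O_k$ gives the alternating verdicts $0,1,0,\ldots$ with probability above $0.9$. This shows each complement $V_k^c$ is nowhere dense, and a separate argument shows $\bigcap_k V_k$ consists of inconsistent points.

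You use the same two ingredients:
\begin{itemize}
\item lower semicontinuity of $\PP\mapsto\PP^{\;\!n}\{\phii_n=i\}$, from finite precision plus portmanteau (the paper's Step 3);
\item density of $D_i$ in the whole union $H_0\cup H_1$ (the paper's Step 1).
\end{itemize}
The difference is that you decouple the two verdicts. Each $O_{i,m,k}$ is open by semicontinuity, and it is dense simply because it contains $D_i$. So no recursion or bookkeeping of alternation order is needed, and the inconsistency-witnessing set is exhibited outright as a dense $G_\delta$. Your construction also strengthens for free: replacing $1/2-1/k$ by $1-1/k$ shows that both $\limsup_n \PP^{\;\!n}\{\phii_n=0\}$ and $\limsup_n \PP^{\;\!n}\{\phii_n=1\}$ equal $1$ on a comeager set.

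What the paper's nested construction buys in return is a uniform local statement: on a whole open set the test vacillates at the same sample sizes. That fits its mind-change interpretation of vacillation. For the theorem as stated, your argument is the more economical one.
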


\begin{remark*}
Violation of 2, nonmeager consistency, means comeager inconsistency. But reading comeager inconsistency as large-scale failure presumes that ``comeager'' carries its intended sense of topological largeness, which holds precisely when the ambient space $H_0 \cup H_1$ is a {\em Baire} space: there every comeager set is dense by definition, hence nonempty. To be sure, Theorem~1 does not require that $H_0 \cup H_1$ is Baire. What Baireness supplies is not the conclusion but its interpretation. The distinction is not idle: in a non-Baire space the notion can degenerate, for if $H_0 \cup H_1$ is not a Baire space, then it might be that every subset is meager, hence every subset is comeager, so that even the empty set is comeager.\footnote{The rationals under the subspace topology inherited from $\mathbb{R}$ are the standard witness: being countable and without isolated points, $\mathbb{Q}$ is meager in itself, and there ``comeager'' conveys nothing.} The impossibility content of Theorem~1 is therefore secured, for a given testing problem, by checking that its space $H_0 \cup H_1$ is Baire---as it is in each application below, the union being a Polish space and hence Baire. So, in such a problem, if the space $H_0 \cup H_1$ of the considered probability measures is Baire, then any design of a finite-precision test that ensures at least dense consistency in each hypothesis is doomed to incur comeager inconsistency---having a large-scale inconsistency in the topological sense.
\end{remark*}

\begin{example*}
One such problem is the problem of testing the conditional independence between two random variables $X$ and $Y$ given a real-valued random variable $Z$, against its negation, in the distribution-free setting. The denseness condition is satisfied, thanks to Boeken, Skapinakis, Genin \& Mooij (2026, Lemmas 4 and 5) and Boeken, Forr\'{e} \& Mooij (2026, Corollary 1). Moreover, the distribution-free setting satisfies the condition of being a Baire space (the same holds when we add the assumption that requires probability measures to be atomless). So, by the impossibility result presented above, i.e., Theorem 1, this hypothesis testing problem is indeed very hard.
\end{example*}
 
 

The rest of this section is devoted to proving Theorem 1. A test $\phii$ is said to {\em vacillate at least $k$ times} at a probability measure $\PP$ iff there exists an increasing sequence of sample sizes $n_0 < n_1 < n_2 < \ldots < n_k$ such that 
	\op 
	\im $\PP^{\;\! n_i} (\phii_{n_i} = 0) > 0.9$ for each even-numbered $n_i$ in the sequence,
	\im $\PP^{\;\! n_j} (\phii_{n_j} = 1) > 0.9$ for each odd-numbered $n_j$ in the sequence.
	\ed 
We may replace $0.9$ by any threshold above $0.5$.

\begin{lemma}[{The Vacillation Lemma}] Consider any hypothesis testing problem $H_0$ versus $H_1$ such that each hypothesis $H_i$ is dense in the union $H_0 \cup H_1$. Let $\phii$ be a finite-precision test that is densely consistent in each hypothesis. Then, for each positive integer $k$, every nonempty open subset $O \subseteq H_0 \cup H_1$ contains a nonempty open $O'$ subset on which $\phii$ vacillates at least $k$ times.
\end{lemma}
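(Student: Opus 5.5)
The plan is an induction on $k$, driven by one openness fact. For each $n$ and $i\in\{0,1\}$, the set
\[
U_{n,i}=\{\PP : \PP^{\;\!n}\{\phii_n=i\}>0.9\}
\]
is open in the weak topology (relativized to $H_0\cup H_1$). To see this, note that $\{\phii_n=i\}$ is open in ${\cal X}^n$ by the finite-precision property. The map $\PP\mapsto\PP^{\;\!n}$ is weakly continuous, since products of weakly convergent measures on Polish spaces converge weakly. By the portmanteau theorem, $\mathbb{Q}\mapsto\mathbb{Q}(G)$ is lower semicontinuous for open $G$. Hence $\PP\mapsto\PP^{\;\!n}\{\phii_n=i\}$ is lower semicontinuous, and its strict superlevel set $U_{n,i}$ is open. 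This is the only place finite precision enters. I expect it to be the main technical point to get right, in particular the continuity of $\PP\mapsto\PP^{\;\!n}$ and the direction of the semicontinuity.

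Next, I would strengthen the statement so the induction can carry the sample sizes along. The claim is: for every nonempty open $O$ and every $k$, there exist $n_0<\cdots<n_k$ and a nonempty open $O'\subseteq O$ such that every $\PP\in O'$ satisfies the $k$-vacillation inequalities at these same $n_j$. Moreover, the construction will leave freedom to push $n_0$ as large as we like. In fact I will build the chain from the end, so I need $n$-choices that respect order. It is cleanest to induct forward and state the result as: for every nonempty open $O$ and every $N$, there is a nonempty open $O'\subseteq O$ and $N\le n_0<\dots<n_k$ that work uniformly on $O'$.

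For the base case $k=0$, take $N$ and $O$. Since $H_0$ is dense in the union, $O\cap H_0\neq\emptyset$. Let $D_0\subseteq H_0$ be dense in $H_0$ with $\phii$ consistent on $D_0$. Because $O\cap H_0$ is a nonempty open subset of $H_0$, it meets $D_0$; pick $\PP\in O\cap D_0$. By consistency there is $n_0\ge N$ with $\PP^{\;\!n_0}\{\phii_{n_0}=0\}>0.9$. Then set $O'=O\cap U_{n_0,0}$, which is open and contains $\PP$.

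For the inductive step, suppose $O'_{k}\subseteq O$ is nonempty open with uniform $k$-vacillation at $n_0<\dots<n_k$. Let $i$ be the parity of $k+1$. By density of $H_i$ in the union, $O'_k\cap H_i$ is nonempty open in $H_i$, hence it meets the dense consistency set $D_i$; pick $\PP$ there. By consistency choose $n_{k+1}>n_k$ with $\PP^{\;\!n_{k+1}}\{\phii_{n_{k+1}}=i\}>0.9$. Then set $O'_{k+1}=O'_k\cap U_{n_{k+1},i}$, which is nonempty (it contains $\PP$) and open. Every measure in it satisfies all $k+2$ inequalities, and so it vacillates at least $k+1$ times. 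Taking $O'=O'_k$ proves the lemma.
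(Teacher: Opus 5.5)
Your proof is correct and follows the paper's argument essentially step for step: a forward recursion that picks a consistency point of $H_c$ inside the current open set (using density of $H_c$ in the union and of the consistency set in $H_c$), chooses a strictly larger sample size, and intersects with the set $\{\PP : \PP^{\;\!n}\{\phii_n = c\} > 0.9\}$, which is open by finite precision, the portmanteau theorem, and weak continuity of $\PP \mapsto \PP^{\;\!n}$. Two small remarks: passing from sequential convergence of products to continuity needs metrizability of the weak topology for Polish ${\cal X}$, which the paper states explicitly and you should too; and the auxiliary bound $N$ and the aside about building the chain ``from the end'' are unnecessary.
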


\begin{proof}
Fix $k$ and let $O \subseteq H_0 \cup H_1$ be a nonempty open subset. We construct, by recursion on $i = 0, 1, \ldots, k$, a decreasing chain of nonempty open sets
\[
	O = O_{-1} \supseteq O_0 \supseteq O_1 \supseteq \cdots \supseteq O_k
\]
together with an increasing sequence of sample sizes $n_0 < n_1 < \cdots < n_k$ such that, for each $i$,
\[
	\PP^{\;\! n_i}(\phii_{n_i} = c_i) > 0.9 \quad \text{for every } \PP \in O_i,
\]
where $c_i := i \bmod 2$ alternates $0, 1, 0, 1, \ldots$ Taking $O' := O_k$ then yields a nonempty open subset of $O$ on which $\phii$ vacillates at least $k$ times, since every $\PP \in O_k \subseteq O_i$ satisfies the required inequality at $n_i$ for each $i \le k$.

For the recursion, suppose $i \ge 0$ and the nonempty open set $O_{i-1}$ has been constructed (with $O_{-1} = O$), and suppose sample sizes $n_0 < \cdots < n_{i-1}$ have been chosen (vacuously if $i = 0$). Write $c := c_i \in \{0,1\}$ for the hypothesis targeted at this stage.

\smallskip
\noindent\textit{Step 1: pass to a point of $H_c$.} Since $H_c$ is dense in $H_0 \cup H_1$ and $O_{i-1}$ is a nonempty open subset of $H_0 \cup H_1$, the intersection $O_{i-1} \cap H_c$ is nonempty. Because $\phii$ is densely consistent in each hypothesis, it is consistent on some dense subset $D_c \subseteq H_c$; and a dense subset of $H_c$ meets every nonempty relatively open subset of $H_c$, in particular the nonempty relatively open set $O_{i-1} \cap H_c$. Hence there exists a distribution
\[
	\PP^\ast \in O_{i-1} \cap H_c
\]
at which $\phii$ is consistent, i.e. $(\PP^\ast)^{\,n}(\phii_n = c) \to 1$ as $n \to \infty$.

\smallskip
\noindent\textit{Step 2: choose the sample size.} By the convergence in Step 1, choose a sample size $n_i > n_{i-1}$ (with $n_i > 0$ when $i = 0$) large enough that
\[
	(\PP^\ast)^{\,n_i}(\phii_{n_i} = c) > 0.9 .
\]
Such an $n_i$ exists because the limit is $1 > 0.9$; and we may always take it strictly larger than $n_{i-1}$, since the convergence guarantees the inequality holds for all sufficiently large indices.

\smallskip
\noindent\textit{Step 3: open out from the point.} By the finite-precision property, the acceptance region $A := \{\phii_{n_i} = c\} \subseteq {\cal X}^{n_i}$ is open. Consider the set
\[
	U_{n_i} := \bigl\{ \PP \in H_0 \cup H_1 : \PP^{\,n_i}(\phii_{n_i} = c) > 0.9 \bigr\}.
\]
By Step 2 we have $\PP^\ast \in U_{n_i}$, and by Step 1 also $\PP^\ast \in O_{i-1}$. We claim $U_{n_i}$ is open in $H_0 \cup H_1$.
 
Suppose, for reductio, that $U_{n_i}$ is not open. Then some $\PP \in U_{n_i}$ fails to be an interior point: every weak-topology neighbourhood of $\PP$ contains a point of $H_0 \cup H_1$ lying outside $U_{n_i}$. Since ${\cal X}$ is Polish, the space of Borel probability measures on ${\cal X}$ under the weak topology is metrizable, hence first countable; fixing a countable neighbourhood base at $\PP$ and shrinking, we extract a sequence $(\PP_m)_{m \ge 1}$ in $H_0 \cup H_1$ with
\[
	\PP_m \to \PP \ \text{weakly}, \qquad \PP_m \notin U_{n_i} \ \text{for every } m,
\]
so that $\PP_m^{\,n_i}(\phii_{n_i} = c) \le 0.9$ for all $m$. Weak convergence $\PP_m \to \PP$ implies $\PP_m^{\,n_i} \to \PP^{\,n_i}$ weakly on the product space ${\cal X}^{n_i}$. Applying the Portmanteau theorem in its lower-semicontinuity form to the open set $A$,
\[
	\PP^{\,n_i}(A) \;\le\; \liminf_{m \to \infty} \PP_m^{\,n_i}(A) \;\le\; 0.9,
\]
which contradicts $\PP \in U_{n_i}$, i.e. $\PP^{\,n_i}(A) > 0.9$. Hence $U_{n_i}$ is open.
 
Since $\PP^\ast \in O_{i-1} \cap U_{n_i}$, the set
\[
	O_i := O_{i-1} \cap U_{n_i}
\]
is a nonempty open subset of $O_{i-1}$ (it contains $\PP^\ast$). By construction every $\PP \in O_i$ satisfies $\PP^{\,n_i}(\phii_{n_i} = c_i) > 0.9$, completing the recursion step.

\smallskip
After $k+1$ steps we obtain $O' := O_k \subseteq O$, nonempty and open, and sample sizes $n_0 < n_1 < \cdots < n_k$ with $\PP^{\,n_i}(\phii_{n_i} = i \bmod 2) > 0.9$ for every $\PP \in O'$ and every $i \le k$. Setting the even-indexed sample sizes to target $H_0$ and the odd-indexed ones to target $H_1$, this is exactly the statement that $\phii$ vacillates at least $k$ times on $O'$.
\end{proof}

\medskip
\noindent We now prove the theorem. Throughout, topological notions (meager, comeager, interior) are understood relative to the union $H_0 \cup H_1$ with its subspace topology, as stipulated in the definitions.

\begin{proof}[Proof of Theorem 1]
Let $\phii$ be a finite-precision test that is densely consistent in each hypothesis; we show $\phii$ is not nonmeagerly consistent. For each positive integer $k$, let
\[
	V_k := \{ \PP \in H_0 \cup H_1 : \phii \text{ vacillates at least } k \text{ times at } \PP \}
\]
be the set of distributions at which $\phii$ vacillates at least $k$ times. Let $I$ denote the set of distributions in $H_0 \cup H_1$ at which $\phii$ is \emph{not} consistent. The argument has two parts: first, $\bigcap_{k \ge 1} V_k$ is comeager; second, $\bigcap_{k \ge 1} V_k \subseteq I$. Together these make $I$ comeager, so that every set on which $\phii$ is consistent---being a subset of $I^c$---is meager, which is exactly the failure of nonmeager consistency. The two parts are established as follows.

\textit{Part 1: $\bigcap_{k\ge 1} V_k$ is comeager.} Fix $k$. The Vacillation Lemma says that every nonempty open set contains a nonempty open subset of $V_k$; equivalently, $V_k^c$ is nowhere dense. Hence $\bigl(\bigcap_{k} V_k\bigr)^c = \bigcup_k V_k^c$ is a countable union of nowhere dense sets---meager---so $\bigcap_{k} V_k$ is comeager.

\textit{Part 2: $\bigcap_{k\ge 1} V_k \subseteq I$.} Let $\PP \in \bigcap_{k\ge1} V_k$, say with $\PP \in H_i$. Vacillating at least $k$ times for \emph{every} $k$ means that, at arbitrarily large sample sizes, the probability of outputting $0$ rises above $0.9$ and the probability of outputting $1$ rises above $0.9$, alternately and infinitely often. So the probability of outputting $i$ (the true hypothesis $H_i$) keeps going up and down---it exceeds $0.9$ infinitely often (when the verdict favors the truth $H_i$) yet also drops below $0.1$ infinitely often (when the verdict favors the falsehood $H_{1-i}$, since the output probabilities sum to $1$). A sequence of probabilities that oscillates in this way cannot converge to $1$, so $\PP^{\,n}(\phii_n = i) \not\to 1$: consistency fails at $\PP$, i.e. $\PP \in I$.

Combining the two parts, $I \supseteq \bigcap_{k\ge1} V_k$ is comeager, so every set on which $\phii$ is consistent is contained in the meager set $I^c$ and is itself meager. Hence $\phii$ is not nonmeagerly consistent. And since $\phii$ was an arbitrary finite-precision test densely consistent in each hypothesis, no finite-precision test achieves standards~1 and~2 simultaneously.
\end{proof}

\section{Positive Result: A Characterization Theorem}

In a problem of testing $H_0$ versus $H_1$, define the following evaluative standards:
	\op
	\im A test $\phii$ is said to achieve the standard of almost everywhere consistency in each hypothesis iff, for each $i \in \{0, 1\}$, the set of probability measures in $H_i$ at which $\phii$ is consistent can be expressed as $H_i$ minus a subset that is nowhere dense in $H_i$ (with respect to its own subspace topology).
	
	\im A test $\phii$ is said to achieve the standard of almost everywhere consistency iff the set of probability measures in $H_0 \cup H_1$ at which $\phii$ is consistent can be expressed as $H_0 \cup H_1$ minus a nowhere dense subset.

	\im A test $\phii$ is said to achieve the standard of comeager/nonmeager consistency iff the set of probability measures in $H_0 \cup H_1$ at which $\phii$ is consistent is comeager/nonmeager.
	\ed 

In a hypothesis testing problem $H_0$ versus $H_1$, the two hypotheses are called {\em intertwined} in a nonempty open set $O$ in the ambient topological space $H_0 \cup H_1$ iff both $H_0$ and $H_1$ are dense in $O$; they are called {\em nowhere intertwined} iff they are intertwined in no nonempty open set in $H_0 \cup H_1$. 

\begin{theorem}
For any hypothesis testing problem $H_0$ versus $H_1$ whose ambient space $H_0 \cup H_1$ is a Baire space $($which holds, for instance, whenever $H_0 \cup H_1$ is Polish$)$, the following conditions are equivalent:
\ope  
\im The two hypotheses are nowhere intertwined.

\im There exists a finite-precision test procedure that achieves the $($higher$)$ standard of almost everywhere consistency in each hypothesis.

\im There exists a finite-precision test procedure that achieves the $($intermediate$)$ standard of $($i$)$ dense consistency in each hypothesis and $($ii$)$ almost everywhere consistency.
\im There exists a finite-precision test procedure that achieves the $($lower$)$ standard of $($i$)$ dense consistency in each hypothesis and $($ii$)$ comeager consistency.
\ede  
\end{theorem}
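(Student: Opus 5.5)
The plan is to prove the cycle $1 \To 2 \To 3 \To 4 \To 1$. Throughout, topology is relative to $H_0 \cup H_1$ unless stated otherwise.

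\textbf{The two easy links, $2 \To 3$ and $3 \To 4$.} Let $N_i$ be the set of points of $H_i$ at which $\phii$ is inconsistent, nowhere dense in $H_i$.

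First, the complement of a nowhere dense set is dense, so $H_i \setminus N_i$ is dense in $H_i$. This gives dense consistency in each hypothesis.

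Second, a set nowhere dense in a subspace $Y$ is nowhere dense in the ambient space $X$. Indeed, suppose a nonempty open $U$ satisfies $U \subseteq \mathrm{cl}_X N$. Then $U \subseteq \mathrm{cl}_X Y$, so $U \cap Y$ is nonempty, relatively open in $Y$, and contained in $\mathrm{cl}_Y N$, a contradiction. A finite union of nowhere dense sets is nowhere dense, so $N_0 \cup N_1$ is nowhere dense in the union. This gives almost everywhere consistency.

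For $3 \To 4$: a nowhere dense set is meager, so the complement of the inconsistency set is comeager.

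\textbf{The link $4 \To 1$: a localized Theorem~1.} Suppose $H_0, H_1$ are intertwined in some nonempty open $O$, and let $\phii$ satisfy condition 4. Replay the Vacillation Lemma with the ambient space replaced by $O$.

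The hypotheses of the lemma hold there. Each $H_i \cap O$ is dense in $O$. The consistency points of $\phii$ in $H_i$ are dense in $H_i$, hence meet every nonempty relatively open subset of $H_i \cap O$.

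Part~1 of the proof of Theorem~1 then shows that $\bigcap_k V_k \cap O$ is comeager in $O$, and Part~2 shows that every point of this set is a point of inconsistency. On the other hand, the consistency set $C$ is comeager in $H_0 \cup H_1$, so $C \cap O$ is comeager in $O$. Since $O$ is an open subspace of a Baire space, $O$ is itself Baire, so two comeager subsets of $O$ must meet. This contradicts the fact that $C$ is disjoint from $\bigcap_k V_k$. The Baire hypothesis is used exactly here.

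\textbf{The link $1 \To 2$: constructing the test.} Let $G_i := \mathrm{int}\,\mathrm{cl}\,H_i$. Nowhere intertwinedness says exactly that $G_0 \cap G_1 = \emptyset$. Because $\mathrm{cl}\,H_0 \cup \mathrm{cl}\,H_1$ is the whole space, the complement of $G_0 \cup G_1$ lies in the union of the boundaries of the two closed sets $\mathrm{cl}\,H_i$, and is therefore nowhere dense.

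Next, extend $G_0, G_1$ to disjoint open sets $\tilde G_0, \tilde G_1$ in the full space $\mathcal{P}(\mathcal X)$ of probability measures. This uses complete normality of the metrizable space $\mathcal P(\mathcal X)$: $G_0$ and $G_1$ are disjoint and relatively open, hence separated, so they have disjoint open neighbourhoods.

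Fix a countable base of $\mathcal P(\mathcal X)$ consisting of finite intersections of sets $\{\PP : \PP(U_j) > q_j\}$ with $U_j \subseteq \mathcal X$ open and $q_j$ rational. Define the test $\phii_n$ as follows, writing $\hat\PP_n$ for the empirical measure of the sample:
\begin{itemize}
\item output $i$ if $\hat\PP_n$ lies in one of the first $m(n)$ basic sets contained in $\tilde G_i$;
\item output $\texttt{?}$ otherwise.
\end{itemize}

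The acceptance regions are open in $\mathcal X^n$, because $x \mapsto \hat\PP_n(U)$ is lower semicontinuous for open $U$. They are disjoint, since $\tilde G_0 \cap \tilde G_1 = \emptyset$ and any measure lying in both regions would lie in both sets.

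Consistency at each $\PP \in G_i$ comes from Varadarajan's theorem that $\hat\PP_n \to \PP$ weakly almost surely. Choose a basic set $B$ with $\PP \in B \subseteq \tilde G_i$ and shrink it so that $\PP$ satisfies its defining strict inequalities with margin. Then eventually $\hat\PP_n \in B$ with probability tending to $1$, and $B$ is among the first $m(n)$ basic sets once $m(n) \to \infty$.

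\textbf{Expected main obstacle.} What remains is to show that $H_i \setminus G_i$ is nowhere dense in $H_i$'s own topology, not merely in the union. The piece $H_i \cap G_{1-i}$ is nowhere dense in the union: any open $U \subseteq G_{1-i}$ with $U \subseteq \mathrm{cl}\,H_i$ would make the hypotheses intertwined in $U$. However, transferring this to the subspace $H_i$ is delicate when $H_i$ is not dense. If the direct argument fails, my fallback is to replace $G_i$ by the $H_i$-relative interior of the $H_i$-closure and redo the separation, or to exploit that $H_{1-i}$ is nowhere dense inside $G_i$ to thicken $\tilde G_i$ so that it captures a relatively dense open part of $H_i$.
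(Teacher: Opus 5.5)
Your links $2 \To 3$, $3 \To 4$ and $4 \To 1$ are correct. The last is the paper's own argument: Theorem~1 replayed on the open subproblem $H_i \cap O$, with Baireness of $O$ forcing two comeager subsets of $O$ to meet.

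The gap is in $1 \To 2$, and it is worse than an unfinished step. The step you defer is false, and no test whose limiting verdict is $i$ on an open set $\tilde G_i$ can work in general. Take $\mathcal X = \mathbb R$ and let $H_0 \cup H_1 = \{\delta_x : x \in [0,1]\}$, a homeomorphic copy of $[0,1]$ under the weak topology. Let $H_1$ be the point masses on the Cantor set $K$ and $H_0$ the rest. Then $H_0$ is open and dense and $H_1$ is nowhere dense, so the hypotheses are nowhere intertwined. But $G_1 = \mathrm{int}\,\mathrm{cl}\,H_1 = \varnothing$ and $G_0$ is the whole union, so your test converges to verdict $0$ at \emph{every} point of $H_1$. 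Here $H_1 \setminus G_1 = H_1$. This is nowhere dense in the union, so your test is almost everywhere consistent overall, yet it is not even densely consistent in $H_1$.

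Your fallbacks do not help. The $H_1$-relative interior of the $H_1$-closure is just $H_1$, and it cannot be separated from $H_0$ by disjoint open sets, since $H_1 \subseteq \mathrm{cl}\,H_0$. Thickening fails too: any open $\tilde G_1$ containing a point of $H_1$ meets the dense open set $H_0$ in a nonempty relatively open subset of $H_0$. On that subset the test would converge to the wrong verdict, destroying almost everywhere consistency in $H_0$.

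The missing idea is that the thin part of a hypothesis must be captured by a relatively \emph{closed} set, so the test must be able to converge to a verdict on a non-open set. The paper does this with a case analysis over a countable base. Where $H_0$ is nowhere dense in a basic open set $O^{(i)}$, it assigns to $H_0$ the set $O^{(i)} \setminus \mathrm{int}(H_1)$, an open set intersected with a closed one. Elsewhere it assigns nonempty open subsets of $\mathrm{int}(H_0)$, and dually for $H_1$. Nowhere intertwinedness guarantees that a case-(b) basic set and a case-(c) basic set never overlap. So the resulting $F_\sigma$ sets $A_0, A_1$ are disjoint, and $H_i \setminus A_i$ is nowhere dense in $H_i$ itself.

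A closed-form choice that also works is $A_i = \mathrm{int}(H_i) \cup \bigl((H_0 \cup H_1) \setminus (\mathrm{cl}\,\mathrm{int}(H_i) \cup \mathrm{int}(H_{1-i}))\bigr)$. Here disjointness comes from Proposition~1, and in the example it gives $A_1 = H_1$, a closed set.

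The test is then supplied by Lemma~2 of Boeken et al., which gives a pointwise consistent finite-precision test for any $F_\sigma$-versus-$F_\sigma$ problem. Your open-separation test built from Varadarajan's theorem handles only open pieces and cannot replace it.
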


Although the standards in conditions 2, 3, and 4 differ---ranging from higher to lower---they turn out to be achievable under exactly the same condition (given the Baire assumption). This suggests a methodological moral: when a lower standard mentioned in 3 or 4 is achievable, one need not settle for it---one may aim for the higher one mentioned in 2.

The proof proceeds along the cycle $1 \To 2 \To 3 \To 4 \To 1$. Of these, $4 \To 1$ is the only part that relies on the Baire assumption, and it is essentially a corollary of the main impossibility result, Theorem 1, applied not to the original problem $H_0$ versus $H_1$ but to an ``open subproblem'' of it. Also, $1 \To 2$ is an existence claim, whose proof relies on a test-construction lemma that has been established in the literature; the idea comes from Genin \& Kelly (2017, Theorem 4.2, the 1-to-3 Side), but the exact lemma used here is provided by Boeken, Skapinakis, Genin \& Mooij (2026, Theorem 3, the 2-to-1 Side):

\begin{lemma}[Boeken, Skapinakis, Genin \& Mooij (2026)]
In any hypothesis testing problem $H_0$ versus $H_1$, if each hypothesis $H_i$ with $i \in \{0, 1\}$ is a countable union of closed sets in $H_0 \cup H_1$, there exists a finite-precision test that is everywhere (i.e., pointwise) consistent. 
\end{lemma}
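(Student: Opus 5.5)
The plan is to reduce everything to a single building block: a finite-precision ``refutation'' procedure for one closed set. Then I would combine countably many such procedures by a priority (first-survivor) rule.

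Write $H_0 = \bigcup_{k} F_k$ and $H_1 = \bigcup_{k} G_k$, where each $F_k, G_k$ is closed in $H_0 \cup H_1$. Replace each by its closure $\bar F_k, \bar G_k$ in the full space of probability measures on ${\cal X}$. Relative closedness gives $\bar F_k \cap (H_0 \cup H_1) = F_k$ and $\bar G_k \cap (H_0\cup H_1) = G_k$. Since $H_0 \cap H_1 = \emptyset$, a measure in $H_0$ lies in no $\bar G_j$, and a measure in $H_1$ lies in no $\bar F_k$. Enumerate the closed sets in one list $A_1 = \bar F_1, A_2 = \bar G_1, A_3 = \bar F_2, \ldots$, and attach to $A_p$ the label $\ell(p) \in \{0,1\}$ of the hypothesis it came from. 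For $\PP \in H_0 \cup H_1$, let $p^\ast(\PP)$ be the least $p$ with $\PP \in A_p$. Then $\ell(p^\ast(\PP))$ is the true label, and $\PP \notin A_q$ for every $q < p^\ast$.

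\emph{Building block.} For each closed set $A$ in the weak topology, I want a sequence of measurable maps $\psi^A_n : {\cal X}^n \to \{\text{reject}, \text{keep}\}$ with three properties:
\begin{itemize}
\item the rejection region $\{\psi^A_n = \text{reject}\}$ is open in ${\cal X}^n$;
\item $\PP^n\{\psi^A_n = \text{reject}\} \to 1$ for every $\PP \notin A$;
\item $\PP^n\{\psi^A_n = \text{reject}\} \to 0$ for every $\PP \in A$.
\end{itemize}
The natural first attempt uses a metric $d$ compatible with the weak topology (e.g.\ bounded-Lipschitz) and the empirical measure $\hat\PP_n$. The map $x \mapsto \hat\PP_n$ is continuous from ${\cal X}^n$, so $\{d(\hat\PP_n, A) > \varepsilon_n\}$ is open for any threshold $\varepsilon_n$. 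If $\varepsilon_n \to 0$, points outside $A$ are rejected with probability tending to $1$, because $d(\hat\PP_n,\PP) \to 0$ in probability and $d(\PP,A) > 0$.

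\emph{Combination.} Define $\phii_n(x)$ as follows. Look at $p = 1, \ldots, n$ and find the first $p$ with $\psi^{A_p}_n(x) = \text{keep}$; output $\ell(p)$, and output \texttt{?} if there is no such $p$. For fixed $\PP$ with $p^\ast = p^\ast(\PP)$, consider the finitely many sets $A_1,\ldots,A_{p^\ast-1}$, none of which contains $\PP$: each is rejected with probability tending to $1$. Meanwhile $A_{p^\ast}$ is kept with probability tending to $1$. Hence $\PP^n\{\phii_n = \ell(p^\ast)\} \to 1$, which gives pointwise consistency.

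Finite precision also follows from this rule. Write $R_q = \{\psi^{A_q}_n = \text{reject}\}$ (open) and $K_p$ for its complement (closed). The event $\{\phii_n = i\}$ is a finite union over $p \le n$ with $\ell(p) = i$ of $\bigcap_{q<p} R_q \cap K_p$. That set is not obviously open, because $K_p$ is closed. So I would reformulate the output rule so that each acceptance event is written using only strict inequalities. One way is to output $\ell(p)$ only when the rejections of $A_1,\dots,A_{p-1}$ hold with a strict margin. Another is to replace ``keep $A_p$'' by an open condition such as $d(\hat\PP_n, A_p) < d(\hat\PP_n, A_q) - \delta$ for all $q<p$. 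I would check that either choice preserves the convergence argument, which only ever uses strict limits.

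The main obstacle is the third property of the building block: keeping $A$ with probability tending to $1$ when $\PP \in A$. With a fixed schedule $\varepsilon_n$, this requires $\PP^n\{d(\hat\PP_n,\PP) > \varepsilon_n\} \to 0$ for every $\PP$. On a general Polish space there is no universal rate for weak convergence of empirical measures, since the rate depends on the tightness of $\PP$. To get around this, I would first follow Genin \& Kelly (2017). They use a countable base for the weak topology consisting of sets $\{\mathbb{Q} : \mathbb{Q}(U_1) > r_1, \ldots, \mathbb{Q}(U_m) > r_m\}$ with $U_j$ open. The complement of $A$ is a countable union $\bigcup_m B_m$ of such basic sets. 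I would then reject $A$ at sample size $n$ iff $\hat\PP_n \in B_m$ with a margin for some $m \le m_n$, with $m_n \to \infty$ slowly. Each single basic event involves finitely many open sets and has a fixed margin, so Hoeffding-type bounds apply uniformly over measures. The remaining work is to choose $m_n$ and the margins so that measures in $A$ are rejected with vanishing probability. If this step resists a direct argument, I would invoke it as the known fact underlying Genin \& Kelly's Theorem 4.2: closed sets are refutable in the limit by finite-precision tests.
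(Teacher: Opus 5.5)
The paper gives no proof of this lemma; it imports it from Boeken, Skapinakis, Genin and Mooij. Your proposal therefore has to stand on its own, and as written it leaves open the step that carries the lemma's content.

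Your reduction is sound: closures in the full space, the interleaved list $A_p$ with labels $\ell(p)$, the first-survivor rule, and the consistency argument given a good building block. But finite precision needs both $\{\phii_n = 0\}$ and $\{\phii_n = 1\}$ to be open. So within your architecture you need, for each closed $A$, a three-valued block whose reject region \emph{and} keep region are both open, with keep-probability tending to $1$ on $A$ and reject-probability tending to $1$ off $A$. You never produce one.

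\begin{itemize}
\item Your bounded-Lipschitz block has open regions but, as you note, need not keep $A$.
\item Your Genin--Kelly block gets the probabilities right: Hoeffding plus a union bound, with margins $\delta_n \to 0$ and $\log m_n = o(n\delta_n^2)$, is routine. But its keep region is the complement of $\bigcup_{m\le m_n}\bigcap_j\{\hat{\PP}_n(U_j) > r_j+\delta_n\}$. Since $x \mapsto \hat{\PP}_n(x)(U)$ is only lower semicontinuous for open $U$, that keep region is closed and in general not open: a sample point on the boundary of $U_j$ can be nudged into $U_j$, flipping keep to reject.
\end{itemize}

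Your proposed repairs do not address this. Putting margins on the rejections of $A_1,\ldots,A_{p-1}$ leaves the closed condition on $A_p$ in place. The rule $d(\hat{\PP}_n,A_p) < d(\hat{\PP}_n,A_q)-\delta$ for all $q<p$ is vacuous at $p=1$, so the first-survivor rule would always output $\ell(1)$. For fixed $\delta$ it also fails at any measure within $\delta$ of an earlier $A_q$. The fallback to Genin and Kelly does not close the gap either: their tests obey a different admissibility condition on acceptance regions, which is exactly why the paper cites Boeken et al.\ for the finite-precision version.

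The missing idea is that your ``main obstacle'' is an artifact of choosing the bounded-Lipschitz metric; your first attempt works with a better metric.

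\begin{itemize}
\item Since ${\cal X}$ is separable metrizable, fix a totally bounded compatible metric on ${\cal X}$. Take a countable family $(f_k)$ of continuous functions with $\sup|f_k|\le 1$ that is uniformly dense in the unit ball of the bounded functions uniformly continuous for that metric.
\item Then $d(\mu,\nu)=\sum_k 2^{-k}\bigl|\int f_k\,d\mu-\int f_k\,d\nu\bigr|$ metrizes the weak topology; this is the standard metrization.
\item The expected absolute deviation of $\int f_k\,d\hat{\PP}_n$ from $\int f_k\,d\PP$ is at most $n^{-1/2}$. Hence the expected value of $d(\hat{\PP}_n,\PP)$ is at most $n^{-1/2}$ for \emph{every} $\PP$, which is a universal rate.
\end{itemize}

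Now take $\varepsilon_n=n^{-1/4}$. Let $\phii_n$ output $\ell(p)$ if, for some $p\le n$, $d(\hat{\PP}_n,A_q)>\varepsilon_n$ for all $q<p$ and $d(\hat{\PP}_n,A_p)<\varepsilon_n$; otherwise output \texttt{?}.

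\begin{itemize}
\item These events are pairwise disjoint, since for $p<p'$ they disagree about $A_p$.
\item They are open, because $x\mapsto d(\hat{\PP}_n(x),A)$ is continuous on ${\cal X}^n$.
\item Markov's inequality gives $\PP^n\{d(\hat{\PP}_n,A_{p^\ast})\ge\varepsilon_n\}\le n^{-1/4}$.
\item The triangle inequality, together with $d(\PP,A_q)>0$, gives rejection of each $A_q$ with $q<p^\ast$ with probability tending to $1$.
\end{itemize}

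Your priority argument then yields pointwise consistency with open acceptance regions, with no Genin--Kelly machinery needed.
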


The rest of this section is devoted to proving Theorem 2.

\begin{lemma}
Part $1 \To 2$ of Theorem 2 holds. That is, in any hypothesis testing problem, if the two hypotheses are nowhere intertwined, there exists a finite-precision test that is almost everywhere consistent in each of the two hypotheses.
\end{lemma}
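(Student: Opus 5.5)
The plan is to reduce to Lemma~2. Write $X := H_0 \cup H_1$. I will look for subsets $G_0 \subseteq H_0$ and $G_1 \subseteq H_1$ with two properties. First, each $G_i$ is a countable union of closed sets in the subspace $G_0 \cup G_1$. Second, each remainder $H_i \setminus G_i$ is nowhere dense in $H_i$ with respect to its own subspace topology. Lemma~2, applied to the auxiliary problem $G_0$ versus $G_1$, then gives a finite-precision test $\phii$ that is consistent at every point of $G_0 \cup G_1$. The finite-precision property only concerns the acceptance regions $\{\phii_n = i\} \subseteq {\cal X}^n$, which do not depend on which hypotheses are being tested. So the same $\phii$ is a finite-precision test for $H_0$ versus $H_1$. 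Consistency at $\PP \in G_i$ means $\PP^{\,n}\{\phii_n = i\} \to 1$, and $G_i \subseteq H_i$, so this is consistency in the original problem. Hence the set where $\phii$ is consistent contains $H_i \setminus (H_i \setminus G_i)$, which is almost everywhere consistency in each hypothesis.

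The first tool is metrizability. Since ${\cal X}$ is Polish, the weak topology on probability measures is metrizable, so every subspace is metrizable. In a metrizable space every open set is a countable union of closed sets. It therefore suffices that each $G_i$ be either open or closed in $G_0 \cup G_1$, and it is enough to arrange that $G_1$ is relatively open in $G_0 \cup G_1$. The second tool is nowhere intertwinedness. Put $W_0 := X \setminus \overline{H_1}$ and $W_1 := X \setminus \overline{H_0}$, with closures taken in $X$. These are open sets with $W_i \subseteq H_i$, and $W_0 \cup W_1$ is dense in $X$. To see the density, take a nonempty open $O$. Some $H_j$ is not dense in $O$, so some nonempty open $O' \subseteq O$ misses $H_j$. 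Then $O'$ misses $\overline{H_j}$, so $O' \subseteq W_{1-j}$.

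The natural first attempt is $G_0 := W_0$ and $G_1 := W_1$. Both are open in $G_0 \cup G_1$, hence each is closed there as the complement of the other, so Lemma~2 applies. This attempt fails on the remainder condition. Let $H_0 = \{p\}$ and let $H_1$ be a sequence converging to $p$. This problem is nowhere intertwined, yet $p \notin W_0$, and $\{p\}$ is not nowhere dense in $H_0$. The asymmetric choice $G_0 := H_0$, $G_1 := W_1$ satisfies the closedness condition, since $W_1$ is open in $X$ and hence open in $H_0 \cup W_1$. It handles this example but fails in the mirror case $H_1 = \{q\}$. So the remainders must be chosen hypothesis by hypothesis.

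I will first try the following choice. Split each remainder $R_i := H_i \cap (\overline{H_0} \cap \overline{H_1})$ into the part $R_i^{\circ}$ that is relatively open in $H_i$ (its interior in $H_i$) and the rest. The rest is nowhere dense in $H_i$, being the boundary in $H_i$ of a relatively closed set. Then take $G_i := W_i \cup R_i^{\circ}$, which is relatively open in $H_i$. The isolated-point examples above show exactly why $R_i^{\circ}$ must be kept. What remains to check is that $G_0$ and $G_1$ are each relatively open in $G_0 \cup G_1$, so that Lemma~2 applies. That is, no point of $G_0$ may be a limit of points of $G_1$, and vice versa.

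This separation step is the main obstacle. It is where nowhere intertwinedness has to be used a second time, locally inside the closed nowhere dense set $\overline{H_0} \cap \overline{H_1}$. If the separation fails, the fallback is to shrink each $R_i^{\circ}$ to $R_i^{\circ} \setminus \overline{G_{1-i}}$. I would then verify that only a set nowhere dense in $H_i$ is discarded, and I would iterate this correction countably many times if necessary.
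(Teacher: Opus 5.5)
Your reduction to Lemma~2 with $G_i \subseteq H_i$ is sound, and the choice $G_i = W_i \cup R_i^{\circ}$ does in fact work: the remainder $R_i \setminus R_i^{\circ}$ is nowhere dense in $H_i$. But there is a genuine gap. The step you leave open carries the whole lemma, and the form in which you pose it is false.

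Let $p \in R_0^{\circ}$ and pick $U_0$ open in $X$ with $U_0 \cap H_0 = R_0^{\circ}$. Then $U_0 \subseteq \overline{H_1}$, since its trace on $H_0$ lies in $R_0 \subseteq \overline{H_1}$. By nowhere intertwinedness, $H_0$ is therefore nowhere dense in $U_0$. So every neighbourhood of $p$ contains a nonempty open set lying in $H_1$, hence in $W_1$. Thus $R_0^{\circ} \subseteq \overline{W_1}$, and likewise $R_1^{\circ} \subseteq \overline{W_0}$. Consequently $G_0$ and $G_1$ are both relatively open in $G_0 \cup G_1$ only if $R_0^{\circ} = R_1^{\circ} = \varnothing$. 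That is precisely when your first attempt $G_i = W_i$ already works.

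For the same reason your fallback $R_i^{\circ} \setminus \overline{G_{1-i}}$ is empty, and it returns you to that failed attempt. Nor can you always make even one of the two sets relatively open. Take isolated points $a_n \to p$ and $b_n \to q$ (e.g.\ Dirac measures), and let $H_0 = \{p\} \cup \{b_n\}$ and $H_1 = \{q\} \cup \{a_n\}$. This problem is nowhere intertwined. Each $H_i$ is discrete in itself, so $G_i = H_i$ is forced, and neither $H_0$ nor $H_1$ is open or closed in $X$.

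The missing idea is that Lemma~2 only asks for $F_\sigma$, and your sets are of the form open $\cup$ closed in $Y := G_0 \cup G_1$. Choose $U_i$ open in $X$ with $U_i \cap H_i = R_i^{\circ}$. As above, $U_i \subseteq \overline{H_{1-i}}$, so if $U_0 \cap U_1$ were nonempty, both hypotheses would be dense in it. This second use of nowhere intertwinedness gives $U_0 \cap U_1 = \varnothing$. Hence no point of $W_1$ (open and disjoint from $H_0$) or of $R_1^{\circ} \subseteq U_1$ lies in the $Y$-closure of $R_0^{\circ}$. So $G_0 = W_0 \cup \mathrm{cl}_Y(R_0^{\circ})$ is the union of an open and a closed subset of the metrizable space $Y$, hence $F_\sigma$. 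The same holds symmetrically for $G_1$, and Lemma~2 finishes the proof as you describe.

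Repaired this way, your route is a base-free alternative to the paper's. The paper runs through a countable base with cases (a)--(c) and lets $A_0^{(i)} = O^{(i)} \setminus \mathrm{int}(H_1)$ spill into $H_1$. It relies on the disjointness of case-(b) and case-(c) basic sets, which is the exact counterpart of $U_0 \cap U_1 = \varnothing$. Your version keeps $G_i \subseteq H_i$ and dispenses with the case split.
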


\begin{proof}
{\em Part} $1 \Rightarrow 2$: The goal is to construct disjoint sets $A_0$ and $A_1$, both being a countable union of closed sets, such that Lemma 2 is applicable. There is no need for $A_i$ to be identical to $H_i$; it suffices that, for each $i \in \{0, 1\}$, the intersection $A_i \cap H_i$ covers $H_i$ almost everywhere---that is, $H_i \setminus A_i$ is nowhere dense in $H_i$ (as a topological space in its own right). Throughout, $\text{int}(\,\cdot\,)$ denotes the interior taken in the ambient space $H_0 \cup H_1$.

Since ${\cal X}$ is Polish, the space of probability measures under the weak topology is Polish, and hence so is its subspace $H_0 \cup H_1$; in particular $H_0 \cup H_1$ is second countable. Fix a countable base $O^{(1)}, O^{(2)}, \ldots$ for $H_0 \cup H_1$. (It is essential that the $O^{(i)}$ form a base, not merely a cover: the argument below places each point of $H_i$ that needs to be captured inside an \emph{arbitrarily small} basic set of the appropriate type.) Each $O^{(i)}$ falls under one of the following cases:
	\op 
	\im Case $(a)$: Both $H_0 \cap O^{(i)}$ and $H_1 \cap O^{(i)}$ have a nonempty open subset in $O^{(i)}$. 
		\op
		\im Let $A_0^{(i)}$ be a nonempty subset of $H_0 \cap O^{(i)}$ that is open in $H_0 \cup H_1$.
		\im Let $A_1^{(i)}$ be a nonempty subset of $H_1 \cap O^{(i)}$ that is open in $H_0 \cup H_1$.
		\ed 
	
	\im Case $(b)$: $H_0 \cap O^{(i)}$ has no nonempty open subset in $O^{(i)}$, and $H_1 \cap O^{(i)}$ has a nonempty open subset in $O^{(i)}$.
		\op
		\im Then $H_1 \cap O^{(i)}$ is dense in $O^{(i)}$. So $H_0 \cap O^{(i)}$ is nowhere dense in $O^{(i)}$; for otherwise its closure would contain a nonempty open subset $W$ of $O^{(i)}$, on which $H_0$ is dense while $H_1$ remains dense, making $H_0$ and $H_1$ intertwined in $W$---a contradiction.
		\im Let $A_0^{(i)} = O^{(i)} \setminus \text{int}(H_1)$. 
		\im Let $A_1^{(i)}$ be a nonempty subset of $H_1 \cap O^{(i)}$ that is open in $H_0 \cup H_1$.
		\ed 
		
	\im Case $(c)$: This is the dual of the preceding case; that is, $H_0 \cap O^{(i)}$ has a nonempty open subset in $O^{(i)}$, and $H_1 \cap O^{(i)}$ has no nonempty open subset in $O^{(i)}$.
		\op
		\im Then, for the same reason, $H_1 \cap O^{(i)}$ is nowhere dense in $O^{(i)}$.
		\im Let $A_0^{(i)}$ be a nonempty subset of $H_0 \cap O^{(i)}$ that is open in $H_0 \cup H_1$.
		\im Let $A_1^{(i)} = O^{(i)} \setminus \text{int}(H_0)$.
		\ed 
	\ed 
These three cases are exhaustive. The only remaining possibility is that neither $H_0 \cap O^{(i)}$ nor $H_1 \cap O^{(i)}$ has a nonempty open subset in $O^{(i)}$; but since these two sets partition $O^{(i)}$, each having empty interior in $O^{(i)}$ means each is the complement of a set with empty interior, hence each is dense in $O^{(i)}$---so $H_0$ and $H_1$ would be intertwined in $O^{(i)}$, contrary to hypothesis. (Note that this uses only nowhere-intertwinedness, not the Baire property.) Now define
	\op 
	\im $A_0 = \bigcup_{i \in \mathbb{N}} A_0^{(i)}$,
	\im $A_1 = \bigcup_{i \in \mathbb{N}} A_1^{(i)}$.
	\ed 

\medskip
\noindent\textit{Disjointness of $A_0$ and $A_1$.} We first record that any $O^{(i)}$ in case $(b)$ is disjoint from any $O^{(j)}$ in case $(c)$. Otherwise $W := O^{(i)} \cap O^{(j)}$ is a nonempty open set. As an open subset of $O^{(i)}$, it inherits that $H_0$ is nowhere dense in $W$, so $H_1$ is dense in $W$; as an open subset of $O^{(j)}$, it inherits that $H_1$ is nowhere dense in $W$, so $H_0$ is dense in $W$. Then $H_0$ and $H_1$ are both dense in $W$---intertwined---contradicting the hypothesis. (Again this uses only nowhere-intertwinedness.)

It suffices to check $A_0^{(i)} \cap A_1^{(j)} = \varnothing$ for all $i, j$. Each $A_0^{(i)}$ is of one of two forms: a nonempty open subset of $H_0$ (cases $(a)$, $(c)$), which therefore lies in $\text{int}(H_0)$; or $O^{(i)} \setminus \text{int}(H_1)$ (case $(b)$), which is disjoint from $\text{int}(H_1)$. Dually, each $A_1^{(j)}$ is either an open subset of $H_1$ (cases $(a)$, $(b)$), lying in $\text{int}(H_1)$, or $O^{(j)} \setminus \text{int}(H_0)$ (case $(c)$), disjoint from $\text{int}(H_0)$. Since $\text{int}(H_0) \cap \text{int}(H_1) = \varnothing$, the only combination not immediately yielding disjointness is $A_0^{(i)} = O^{(i)} \setminus \text{int}(H_1)$ with $A_1^{(j)} = O^{(j)} \setminus \text{int}(H_0)$; but then $O^{(i)}$ is in case $(b)$ and $O^{(j)}$ in case $(c)$, so $O^{(i)} \cap O^{(j)} = \varnothing$ by the preceding paragraph, whence $A_0^{(i)} \subseteq O^{(i)}$ and $A_1^{(j)} \subseteq O^{(j)}$ are disjoint. Therefore $A_0 \cap A_1 = \varnothing$.

\medskip
\noindent\textit{Covering.} We claim that, for each $i \in \{0,1\}$, the set $H_i \setminus A_i$ is nowhere dense in $H_i$. We use the following elementary characterization: for a subset $S$ of a topological space $Y$, the complement $Y \setminus S$ is nowhere dense in $Y$ if and only if every nonempty open subset of $Y$ contains a nonempty open subset that is included in $S$ (equivalently, $\text{int}_Y S$ is dense in $Y$). This is exactly the intuition guiding the construction.

Take $i = 0$; the case $i = 1$ is symmetric. Let $Y = H_0$ and $S = A_0 \cap H_0$, and let $W$ be a nonempty relatively open subset of $H_0$, say $W = O \cap H_0$ with $O$ open in $H_0 \cup H_1$ and $O \cap H_0 \neq \varnothing$. Pick a point $p \in O \cap H_0$; since the $O^{(k)}$ form a base, there is one with $p \in O^{(k)} \subseteq O$, and then $O^{(k)} \cap H_0 \neq \varnothing$. According to the case of $O^{(k)}$:
	\op
	\im If $O^{(k)}$ is in case $(a)$ or $(c)$, then $A_0^{(k)}$ is a nonempty open subset of $H_0 \cap O^{(k)}$. It is relatively open in $H_0$, included in $A_0 \cap H_0$, and contained in $O \cap H_0 = W$.
	\im If $O^{(k)}$ is in case $(b)$, then $O^{(k)} \cap H_0 \subseteq O^{(k)} \setminus \text{int}(H_1) = A_0^{(k)}$, because $H_0$ is disjoint from $\text{int}(H_1)$. Thus $O^{(k)} \cap H_0$ is a nonempty relatively open subset of $H_0$, included in $A_0 \cap H_0$, and contained in $W$.
	\ed
Either way, $W$ contains a nonempty relatively open subset included in $A_0 \cap H_0$. By the characterization, $H_0 \setminus A_0$ is nowhere dense in $H_0$.

\medskip
\noindent\textit{The sets are $F_\sigma$, and Lemma 2 applies.} Each $A_0^{(i)}$ is either open (cases $(a)$, $(c)$) or of the form $O^{(i)} \cap \bigl(H_0 \cup H_1 \setminus \text{int}(H_1)\bigr)$, the intersection of an open set with a closed set (case $(b)$). In a metrizable space every open set is a countable union of closed sets, and intersecting each such closed set with a fixed closed set keeps it closed; hence each $A_0^{(i)}$ is $F_\sigma$, and the countable union $A_0$ is $F_\sigma$. Likewise $A_1$ is $F_\sigma$. Each $A_i$ is thus $F_\sigma$ in $H_0 \cup H_1$, and therefore $F_\sigma$ in the subspace $A_0 \cup A_1$ as well.

By Lemma 2 applied to the hypothesis testing problem $A_0$ versus $A_1$, there is a test $\phii$ that is everywhere consistent on $A_0 \cup A_1$: for each $\PP \in A_i$, $\PP^{\,n}\{\phii_n = i\} \to 1$. This $\phii$ has the required properties for the original problem $H_0$ versus $H_1$. Indeed, fix $i \in \{0,1\}$. At every $\PP \in H_i \cap A_i$, the test outputs $i$ in the limit, which is correct. Every $\PP \in H_i$ at which $\phii$ may fail lies outside $A_i$: if $\PP \in A_{1-i}$ then $\phii$ misclassifies it, but $A_i \cap A_{1-i} = \varnothing$ puts $\PP$ in $H_i \setminus A_i$; and if $\PP \notin A_0 \cup A_1$, then again $\PP \in H_i \setminus A_i$. Since $H_i \setminus A_i$ is nowhere dense in $H_i$, the set of measures in $H_i$ at which $\phii$ is consistent equals $H_i$ minus a subset nowhere dense in $H_i$. As this holds for each $i$, the test $\phii$ achieves almost everywhere consistency in each hypothesis, establishing condition~2.

\end{proof}

\begin{proof}[Proof of Theorem 2] The proof will proceed in this order: $1 \Rightarrow 2 \Rightarrow 3 \Rightarrow 4 \Rightarrow 1$. 

Part $1 \Rightarrow 2$ has been established in Lemma 3. Part $2 \Rightarrow 3 \Rightarrow 4$ involves only routine verifications; the one point worth recording is that in $2 \Rightarrow 3$ a set nowhere dense in $H_i$ is also nowhere dense in the ambient space $H_0 \cup H_1$, so the two per-hypothesis exceptional sets together form a nowhere dense subset of $H_0 \cup H_1$, which yields almost everywhere consistency overall. 

For part $4 \Rightarrow 1$, suppose that condition 4 holds: there exists a finite-precision test $\phii$ for the problem $H_0$ versus $H_1$ that is densely consistent in each hypothesis and comeagerly consistent. Suppose for {\em reductio} that condition 1 does not hold; then, in the topological space $H_0 \cup H_1$, there is a nonempty open set $O$ on which both $H_0$ and $H_1$ are dense. Consider the ``smaller'' hypothesis testing problem $H'_0$ versus $H'_1$, where $H'_i =_\textrm{df} H_i \cap O$; its ambient space $H'_0 \cup H'_1$ is $O$ itself. Since $H_i$ is dense in $O$, each $H'_i = H_i \cap O$ is dense in $O$, so the smaller problem satisfies the density hypothesis of Theorem 1. Moreover, $\phii$ retains, for the smaller problem, the properties needed: (i) it is still finite-precision, since its acceptance regions $\{\phii_n = i\}$ are unchanged, and still densely consistent in each hypothesis, since if $\phii$ is consistent on a dense subset $D_i \subseteq H_i$ then $D_i \cap O$ is dense in $H_i \cap O = H'_i$ $($a dense set meets every nonempty relatively open subset$)$ and $\phii$ is consistent there; and (ii) it is still comeagerly consistent, since if $\phii$ is consistent on a comeager subset $C \subseteq H_0 \cup H_1$ then $C \cap O$ is comeager in $O$ $($meager sets restrict to meager sets in an open subspace$)$ and $\phii$ is consistent there. Applying Theorem 1 to (i), we obtain (iii): $\phii$ is not nonmeagerly consistent in the smaller problem---that is, its consistent set within $O$ is meager in $O$. But by (ii) that same consistent set is comeager in $O$. Hence $O$ is a union of two subsets that are meager in $O$, so $O$ is meager in itself, and therefore meager in $H_0 \cup H_1$. Thus $H_0 \cup H_1$ has a nonempty open subset that is meager---contradicting the assumption, in the statement of Theorem 2, that $H_0 \cup H_1$ is a Baire space, in which no nonempty open set is meager. This establishes $4 \Rightarrow 1$. 
\end{proof}

\section{More on ``Nowhere Intertwined''}

Nowhere-intertwinedness admits a simple structural characterization:

\begin{proposition}
For any hypothesis testing problem $H_0$ versus $H_1$, the following are equivalent:
\ope
\im The two hypotheses $H_0$ and $H_1$ are nowhere intertwined.
\im $\mathrm{int}(H_0) \cup \mathrm{int}(H_1)$ is dense in $H_0 \cup H_1$.
\im Each hypothesis $H_i$ is the union of an open set and a nowhere dense set $($both taken in $H_0 \cup H_1)$.
\ede
In particular, under nowhere-intertwinedness each $H_i$ is either open, nowhere dense, or the union of a nonempty open set with a nonempty nowhere dense set.
\end{proposition}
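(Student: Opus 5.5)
I plan to work entirely inside the ambient space $Y := H_0 \cup H_1$, where $H_0$ and $H_1$ partition $Y$. All interiors, closures and nowhere-density are taken in $Y$. I will prove $1 \To 2 \To 3 \To 1$ and then read off the final ``in particular'' clause. The basic tool is a fact about a partition $Y = H_0 \sqcup H_1$ and an open set $W$. Since $H_1 \cap W = W \setminus H_0$, the set $H_1$ is dense in $W$ iff $H_0$ contains no nonempty open subset of $W$. Symmetrically, $H_0$ is dense in $W$ iff $W$ meets no nonempty open subset of $H_1$. This is the same complementarity observation already used in the proof of Lemma 3.

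\emph{Step $1 \To 2$.} Suppose $\mathrm{int}(H_0) \cup \mathrm{int}(H_1)$ is not dense. Then some nonempty open $W$ is disjoint from both interiors. So neither $H_0$ nor $H_1$ contains a nonempty open subset of $W$. By the tool, both hypotheses are dense in $W$, and they are intertwined there.

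\emph{Step $2 \To 3$.} Write $H_i = \mathrm{int}(H_i) \cup (H_i \setminus \mathrm{int}(H_i))$. It remains to show that $N_i := H_i \setminus \mathrm{int}(H_i)$ is nowhere dense.
\begin{itemize}
\item $N_0$ is disjoint from $\mathrm{int}(H_1)$, because $H_0 \cap H_1 = \varnothing$.
\item $N_0$ is disjoint from $\mathrm{int}(H_0)$ by definition.
\end{itemize}
Hence $N_0 \subseteq Y \setminus (\mathrm{int}(H_0) \cup \mathrm{int}(H_1))$. The right-hand side is closed, and by condition 2 it has empty interior, so it is nowhere dense. A subset of a nowhere dense set is nowhere dense, so $N_0$ is nowhere dense. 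The same argument handles $N_1$.

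\emph{Step $3 \To 1$.} This is the step needing the most care. Let $H_i = U_i \cup N_i$ with $U_i$ open and $N_i$ nowhere dense, and take any nonempty open $W$. I will show the hypotheses are not intertwined in $W$.
\begin{itemize}
\item If $W \cap U_0 \neq \varnothing$, then $W \cap U_0$ is a nonempty open set inside $H_0$. So $H_1$ is not dense in $W$.
\item Otherwise $W \cap H_0 \subseteq N_0$. Since $N_0$ is nowhere dense, $W$ contains a nonempty open $W' \subseteq W$ disjoint from $N_0$, and hence disjoint from $H_0$. So $H_0$ is not dense in $W$.
\end{itemize}
In either case intertwinedness fails in $W$. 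The only subtlety is that the witness is an open set inside $W$, so density fails on $W$ itself; this is immediate from the definitions.

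\emph{The ``in particular'' clause.} I will split into cases on whether each of the two pieces in the decomposition $H_i = \mathrm{int}(H_i) \cup N_i$ from Step $2 \To 3$ is empty.
\begin{itemize}
\item If $N_i = \varnothing$, then $H_i$ is open.
\item If $\mathrm{int}(H_i) = \varnothing$, then $H_i = N_i$ is nowhere dense.
\item Otherwise $H_i$ is the union of a nonempty open set and a nonempty nowhere dense set.
\end{itemize}
Using the canonical pieces $\mathrm{int}(H_i)$ and $N_i$ makes this trichotomy exhaustive.
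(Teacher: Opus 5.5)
Your proof is correct. It uses the same two ingredients as the paper: the partition complementarity ($H_1$ is dense in an open $W$ iff $W$ contains no nonempty open subset of $H_0$), and the canonical decomposition $H_i = \mathrm{int}(H_i) \cup (H_i \setminus \mathrm{int}(H_i))$. Your $1 \To 2$ is one half of the paper's $1 \Leftrightarrow 2$, and your $2 \To 3$ is the paper's argument verbatim.

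The routes differ only in how the cycle closes:
\begin{itemize}
\item The paper proves $1 \Leftrightarrow 2$ as one chain of biconditionals. In effect it shows that $H_0, H_1$ are intertwined in an open $O$ exactly when $O$ misses $\mathrm{int}(H_0)\cup\mathrm{int}(H_1)$, which locates where intertwining occurs. It then gets $3 \To 2$ by bounding the complement of $\mathrm{int}(H_0)\cup\mathrm{int}(H_1)$ by $N_0 \cup N_1$, which uses both decompositions.
\item You go $3 \To 1$ directly, splitting on whether $W$ meets $U_0$, and use only $H_0 = U_0 \cup N_0$. So your argument shows, as a small bonus, that condition~3 for a single hypothesis already forces nowhere-intertwinedness, and hence condition~3 for both.
\end{itemize}
You also check the closing ``in particular'' trichotomy explicitly via the canonical pieces $\mathrm{int}(H_i)$ and $H_i \setminus \mathrm{int}(H_i)$, which the paper leaves to the reader.
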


\begin{proof}
Throughout, $\text{int}(\,\cdot\,)$, ``dense'', and ``nowhere dense'' are taken in the ambient space $H_0 \cup H_1$.

\smallskip
\noindent$(1 \Leftrightarrow 2)$. Since $H_0$ and $H_1$ partition any open set $O$, the trace $H_0 \cap O$ has empty interior in $O$ if and only if its complement $H_1 \cap O$ is dense in $O$. Hence both hypotheses are dense in $O$ if and only if both have empty interior in $O$, i.e.\ if and only if $O$ is disjoint from $\text{int}(H_0) \cup \text{int}(H_1)$. Therefore the hypotheses are intertwined in some nonempty open set if and only if $\text{int}(H_0) \cup \text{int}(H_1)$ fails to meet some nonempty open set---that is, fails to be dense. Negating both sides gives $(1) \Leftrightarrow (2)$.

\smallskip
\noindent$(2 \Rightarrow 3)$. Suppose $\text{int}(H_0) \cup \text{int}(H_1)$ is dense. This set is open, so its complement
\[
	R := (H_0 \cup H_1) \setminus \bigl(\text{int}(H_0) \cup \text{int}(H_1)\bigr)
\]
is closed with empty interior, hence nowhere dense. For each $i$, a point of $H_i \setminus \text{int}(H_i)$ lies in neither $\text{int}(H_0)$ nor $\text{int}(H_1)$ (it is not interior to $H_i$, and it is not in $H_{1-i}$ at all), so $H_i \setminus \text{int}(H_i) \subseteq R$ and is therefore nowhere dense. Thus $H_i = \text{int}(H_i) \cup \bigl(H_i \setminus \text{int}(H_i)\bigr)$ exhibits $H_i$ as an open set together with a nowhere dense set.

\smallskip
\noindent$(3 \Rightarrow 2)$. Suppose $H_i = U_i \cup N_i$ with $U_i$ open and $N_i$ nowhere dense, for each $i$. Then $U_i \subseteq \text{int}(H_i)$, and
\[
	(H_0 \cup H_1) \setminus \bigl(\text{int}(H_0) \cup \text{int}(H_1)\bigr) \subseteq (H_0 \cup H_1) \setminus (U_0 \cup U_1) \subseteq N_0 \cup N_1,
\]
a nowhere dense set. A set whose complement is contained in a nowhere dense set is dense, so $\text{int}(H_0) \cup \text{int}(H_1)$ is dense.
\end{proof}

\section{Another Characterization Result}

Once we have considered ``nowhere intertwined'', it seems natural to define ``everywhere intertwined'': in a hypothesis testing problem $H_0$ versus $H_1$, the two hypotheses are called {\em everywhere intertwined} iff they are intertwined in every nonempty open subset of the ambient space $H_0 \cup H_1$. In fact, this condition holds exactly when both hypotheses are dense in $H_0 \cup H_1$. This returns us to the main impossibility result---whose converse, it turns out, also holds under the Baire assumption---and yields a second characterization:

\begin{theorem}
For any hypothesis testing problem $H_0$ versus $H_1$ whose ambient space $H_0 \cup H_1$ is a Baire space, the following two conditions are equivalent:
\ope  
\im The two hypotheses are everywhere intertwined.
\im There exists no finite-precision test that achieves the $($extremely low$)$ standard of $($i$)$ dense consistency in each hypothesis and $($ii$)$ nonmeager consistency.
\ede  
\end{theorem}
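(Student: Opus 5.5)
The plan is to prove the two directions separately. The forward direction reduces to Theorem~1. The converse is a direct test construction via Lemma~2.

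First I record the equivalence noted in the text: the hypotheses are everywhere intertwined iff each $H_i$ is dense in $H_0 \cup H_1$. If both are dense in the union, then both are dense in every nonempty open $O$, since a dense set meets every nonempty open subset of $O$. Conversely, intertwinedness in the whole space $O = H_0 \cup H_1$ is exactly density of both in the union.

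\emph{Part $1 \Rightarrow 2$.} Given this equivalence, this direction is Theorem~1 verbatim. The density hypothesis of Theorem~1 holds, so no finite-precision test achieves both dense consistency in each hypothesis and nonmeager consistency. The Baire assumption is not even needed here.

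\emph{Part $2 \Rightarrow 1$.} I argue by contraposition. Suppose the hypotheses are not everywhere intertwined. Then some $H_i$ is not dense in the union; without loss of generality, $H_0$ is not dense. So there is a nonempty open $U$ disjoint from $H_0$, which gives $U \subseteq H_1$ and hence $U \subseteq \mathrm{int}(H_1)$.

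Next I take countable dense subsets $D_0 \subseteq H_0$ and $D_1 \subseteq H_1$. These exist because $H_0 \cup H_1$ is a subspace of the separable metrizable space of probability measures, so every subspace is separable. I then set
\[
A_0 := D_0, \qquad A_1 := U \cup D_1 .
\]
These sets are disjoint, since $A_0 \subseteq H_0$ and $A_1 \subseteq H_1$. Each is $F_\sigma$ in $H_0 \cup H_1$: countable sets are countable unions of closed singletons, and the open set $U$ is $F_\sigma$ in a metrizable space. Hence each is also $F_\sigma$ in the subspace $A_0 \cup A_1$.

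Lemma~2, applied to the problem $A_0$ versus $A_1$, then yields a finite-precision test $\phii$ that is pointwise consistent on $A_0 \cup A_1$. The finite-precision property concerns only the acceptance regions in ${\cal X}^n$, so it transfers unchanged to the original problem. For that problem, $\phii$ has the following properties:
\begin{itemize}
\item it is consistent on $D_0$, which is dense in $H_0$, and on $D_1$, which is dense in $H_1$; hence it is densely consistent in each hypothesis;
\item it is consistent on $U$, a nonempty open subset of $H_1 \subseteq H_0 \cup H_1$.
\end{itemize}
Because $H_0 \cup H_1$ is Baire, no nonempty open set is meager, so $U$ is nonmeager and $\phii$ is nonmeagerly consistent. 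This negates condition~2.

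The only delicate points are bookkeeping. One is checking that Lemma~2's $F_\sigma$ hypothesis holds relative to the ambient space $A_0 \cup A_1$ of the subproblem. The other is that the Baire property is used exactly once, to guarantee that $U$ is nonmeager. I expect no substantive obstacle beyond these.
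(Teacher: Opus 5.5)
Your proof is correct and follows the paper's argument: Theorem~1 gives $1 \Rightarrow 2$. For the converse, the paper likewise feeds countable dense subsets $D_0, D_1$ together with a nonempty open set lying inside one hypothesis into Lemma~2, and uses the Baire property only to make that open set nonmeager (you simply swap the roles of $H_0$ and $H_1$). One small point in your favor: you derive separability of each $H_i$ from the separable metrizability of the whole space of measures, whereas the paper appeals to $H_0 \cup H_1$ being Polish, which the theorem does not assume.
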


\begin{proof} $1 \Rightarrow 2$: This is just the impossibility result stated above, i.e., Theorem 1. 

$2 \Rightarrow 1$: We argue by contraposition: assuming condition 1 fails, we produce a finite-precision test that is densely consistent in each hypothesis and nonmeagerly consistent, so that condition 2 fails as well.

Since condition 1 fails, the two hypotheses are not intertwined in some nonempty open set $O \subseteq H_0 \cup H_1$; that is, they are not both dense in $O$. Say $H_1$ is not dense in $O$ (the other case is symmetric). Then some nonempty open $O' \subseteq O$ misses $H_1$, so $O' \subseteq H_0$; being open in $H_0 \cup H_1$, this $O'$ will supply the nonmeagerness.

Because $H_0 \cup H_1$ is Polish, each $H_i$ is separable; fix a countable dense subset $D_i \subseteq H_i$. Put
\[
	H'_0 := D_0 \cup O', \qquad H'_1 := D_1 .
\]
These are disjoint, since $H'_0 \subseteq H_0$, $H'_1 \subseteq H_1$, and $H_0 \cap H_1 = \varnothing$. Each is a countable union of closed sets in the subspace $H'_0 \cup H'_1$: the countable sets $D_0$ and $D_1$ are countable unions of (closed) singletons, and $O'$, being relatively open in $H'_0 \cup H'_1$, is $F_\sigma$ there because in a metrizable space every open set is a countable union of closed sets. Hence $H'_0$ and $H'_1$ meet the hypothesis of Lemma 2.

Applying Lemma 2 to the subproblem $H'_0$ versus $H'_1$ yields a finite-precision test $\phii$ that is everywhere consistent on $H'_0 \cup H'_1$: it outputs $0$ in the limit at each $\PP \in H'_0$ and $1$ in the limit at each $\PP \in H'_1$. Finite precision is a property of the acceptance regions $\{\phii_n = i\} \subseteq {\cal X}^n$ alone, so $\phii$ remains finite-precision when regarded as a test for the original problem $H_0$ versus $H_1$. We verify the two required properties there.

First, $\phii$ is densely consistent in each hypothesis: it outputs $0$ in the limit at every $\PP \in D_0$ and $1$ in the limit at every $\PP \in D_1$, and $D_0, D_1$ are dense in $H_0, H_1$ respectively. Second, $\phii$ is nonmeagerly consistent: at every $\PP \in O' \subseteq H_0$ it outputs $0$ in the limit, which is correct since $O' \subseteq H_0$, so its consistent set (in the original problem) contains the nonempty open set $O'$; and a nonempty open subset of the Baire space $H_0 \cup H_1$ is nonmeager. Thus $\phii$ witnesses the failure of condition 2, which completes the contraposition.
\end{proof}

\section*{References}

\begin{description}
\im Angrist, J. D., Imbens, G. W., \& Rubin, D. B. (1996) Identification of Causal Effects Using Instrumental Variables. {\em Journal of the American Statistical Association} 91(434): 444--455.

\im Banach, S. (1931) \"{U}ber die Baire'sche Kategorie gewisser Funktionenmengen. {\em Studia Mathematica} 3: 174--179.

\im Boeken, P., Skapinakis, E., Genin, K., \& Mooij, J. M. (2026) Topological Criteria for Hypothesis Testing with Finite-Precision Measurements. arXiv preprint arXiv:2601.13946.

\im Boeken, P., Forr\'{e}, P., \& Mooij, J. M. (2026) Are Bayesian Networks Typically Faithful? arXiv preprint arXiv:2410.16004.

\im Genin, K., \& Kelly, K. T. (2017) The Topology of Statistical Verifiability. In {\em Proceedings of the 16th Conference on Theoretical Aspects of Rationality and Knowledge (TARK 2017)} (J. Lang, ed.), Electronic Proceedings in Theoretical Computer Science 251: 236--250.

\im Kelly, K. T. (1996) {\em The Logic of Reliable Inquiry}. New York: Oxford University Press.

\im Lin, H., \& Zhang, J. (2020) On Learning Causal Structures from Non-Experimental Data without Any Faithfulness Assumption. In {\em Proceedings of the 31st International Conference on Algorithmic Learning Theory} (A. Kontorovich \& G. Neu, eds.), Proceedings of Machine Learning Research 117: 554--582. PMLR.

\im Neyman, J., \& Pearson, E. S. (1933) On the Problem of the Most Efficient Tests of Statistical Hypotheses. {\em Philosophical Transactions of the Royal Society of London, Series A} 231: 289--337.

\im Neyman, J., \& Pearson, E. S. (1936) Contributions to the Theory of Testing Statistical Hypotheses. {\em Statistical Research Memoirs} 1: 1--37.

\im Spirtes, P., Glymour, C., \& Scheines, R. (2000) {\em Causation, Prediction, and Search}, 2nd edition. Cambridge, MA: MIT Press.
\end{description}

\end{document}